\numberwithin{equation}{section}
\newtheorem{theorem}{Theorem}[section]
\newtheorem{lemma}[theorem]{Lemma}
\newtheorem{corollary}[theorem]{Corollary}
\newtheorem{definition}[theorem]{Definition}
\theoremstyle{definition}
\newcommand{\A}{{\mathcal A}}
\newcommand{\es}{{\mathcal S}}
\newcommand{\D}{{\mathbb D}}
\newcommand{\CC}{{\mathcal C}}
\newcounter{minutes}\setcounter{minutes}{\time}
\newcounter{hours}\setcounter{hours}{\time}
\begin{document}

\title[Maximal area integral problem]
{On maximal area integral problem for analytic functions in the 
starlike family}

\author{S. K. Sahoo${}^*$}
\address{S. K. Sahoo, Discipline of Mathematics,
Indian Institute of Technology Indore,
Indore 452 017, India}
\email{swadesh@iiti.ac.in}

\author{N. L. Sharma}
\address{N. L. Sharma, Discipline of Mathematics,
Indian Institute of Technology Indore,
Indore 452 017, India}
\email{sharma.navneet23@gmail.com}

\thanks{${}^*$ The corresponding author}

\begin{center}
\texttt{File:~\jobname .tex, printed: \number\day-\number\month-\number\year,
\thehours.\ifnum\theminutes<10{0}\fi\theminutes}
\end{center}

\thispagestyle{empty}


\begin{abstract}
For an analytic function $f$ defined on the unit disk $|z|<1$, let
$\Delta(r,f)$ denote the area of the image of the subdisk $|z|<r$ under $f$, where
$0<r\le 1$. In 1990, Yamashita conjectured that $\Delta(r,z/f)\le \pi r^2$ for convex functions $f$
and it was finally settled in 2013 by Obradovi\'{c} and et. al.. 
In this paper, we consider a class of analytic functions in the unit disk satisfying the
subordination relation $zf'(z)/f(z)\prec (1+(1-2\beta)\alpha z)/(1-\alpha z)$ for
$0\le \beta<1$ and $0<\alpha\le 1$. We prove Yamashita's conjecture problem for functions 
in this class, which solves a partial solution to an open problem posed by Ponnusamy
and Wirths.\\

\smallskip
\noindent
{\bf 2010 Mathematics Subject Classification}. Primary 30C45; Secondary 30C55.

\smallskip
\noindent
{\bf Key words.} 
Analytic, univalent and starlike functions, subordination, Dirichlet finite, area integral, 
Parseval-Gutzmer formula, Gaussian hypergeometric functions.
\end{abstract}

\maketitle

\section{\bf Introduction, Preliminaries, and Main result}

The univalent function has been the central object in the study of 
geometric function theory. Some of its natural
geometric families act a prominent role in the theory of univalent 
functions \cite{Dur83,Good83,POM75} and their geometric
properties. For instance, the classes of starlike, convex and 
close-to-convex, to name just a few. These classes have been 
familiarized and studied by many authors. It is interesting 
to observe that we can obtain many of their analytic properties
by an integrated method. Study of various subclasses of the class 
of starlike functions have been appreciated by several authors. 
The class of starlike
functions of order $\beta\,(0\leq \beta<1)$ was generated by 
Robertson~\cite{Rob36} and has been then studied by
Schild~\cite{Sch58} and Merkes~\cite{Mer62}. Marx~\cite{Mar33} and 
Strohacker~\cite{Str33} proved that if $f(z)$ maps
the unit disk onto a convex domain, then $f(z)$ is starlike of order 
$1/2.$ Gabriel~\cite{Gab55} showed that
the class of starlike functions of order $1/2$ played an important 
role in the solution of differential equations. 
In 1968, Padmanabhan~\cite{Pad68} discussed a different subfamily for the order of starlikeness.
In this paper, we introduce a more general family than the family studied by Padmanabhan.

Define by $\mathbb{D}_r:=\{z\in\mathbb{C}:\,|z|<r\}$, the disk
of radius $r$ centred at the origin. The unit disk is then
defined by $\D:=\D_1$.
Let $\A$ denote the family of all functions $f(z)$ analytic in $\mathbb{D}$ 
and normalized so that $f(0) = 0 = f'(0)-1$, i.e. $f\in\A$ has the power series
representation $f(z)=z+\sum_{n=2}^\infty a_n z^n$. Denote by $\es$, the class of univalent functions $f\in \A$.
The Gaussian hypergeometric function ${}_2F_1(a,b;c;z)$ is defined by the series
$$
1+\sum_{n=1}^{\infty}\frac{(a)_n(b)_n}{(c)_n(1)_n}z^n, \quad |z|<1,
$$
where $a,b$ and $c$ are complex numbers with $c$ is neither zero nor a negative integer. Clearly, the shifted function
$z{}_2F_1(a,b;c;z)$ belongs to $\A$.
The notation $(a)_n$ denotes the shifted factorial and it is defined by
$$
(a)_0 = 1, (a)_n = a(a + 1) \cdots (a + n-1) =\frac{\Gamma (a+n)}{\Gamma (a)},\quad n\geq 1.
$$
Here, $\Gamma$ stands for the usual gamma function. If either (or both) of 
$a$ and $b$ is (are) zero or a 
negative integer(s), then the series terminates.

For two analytic functions $f$ and $g$ in $\D$, we say that $f$ is {\em subordinate} to $g$
if 
$$f(z)=g(w(z)),\quad |z|<1,
$$
for some analytic function $w$ in $\D$ with $w(0)=0$ and $|w(z)|<1.$ 
We express this symbolically by $f \prec g$. In particular, if $g$ is univalent in $\D$,  $f(0)=g(0)$
and $f(\D)\subset g(\D)$ then $f \prec g$. For instance, one can easily see that 
$1/(1+z)\prec (1+z)/(1-z)$, $z\in \D$.

We denote by $\es t(\beta)$, the well-known class of starlike functions of order $\beta$.
Analytically, for $f\in \es, $ the starlike functions are characterized by the condition 
$ {\rm Re}\,\left(zf'(z)/f(z)\right)>\beta,$ where $ 0\leq \beta <1,$ i.e.
$f$ has the subordination property,
$$
\frac{zf'(z)}{f(z)} \prec \frac{1+(1-2\beta) z}{1- z},\quad z\in \D,\, 0\leq \beta<1.
$$
The class $\es t:=\es t(0)$ is the class of starlike functions, i.e.
$f\in \es$ is starlike with respect to the origin, i.e. $tw\in f(\D)$ 
whenever $t\in [0, 1]$ and $w\in f(\D)$.
 
Suppose that $f(z)$ is a function analytic in the unit disk $\D$. For $0<r\le 1$, 
we denote by $\Delta(r,f)$, the area of the image of the disk $\mathbb{D}_r$ under $f(z)$. Thus, 
$$
\Delta(r,f)=\iint_{\mathbb{D}_r}|f'(z)|^2 \, dxdy\quad (z=x+iy).
$$
Computing this area is known as the {\em area problem for the function of type $f$.}
The classical Parseval-Gutzmer formula for a function $f(z)=\sum_{n=0}^\infty a_nz^n$
analytic in $\overline{\mathbb{D}}_r$ states that
$$\frac{1}{2\pi} \int_0^{2\pi}|f(re^{i\theta})|^2\,d\theta = \sum_{n=0}^\infty |a_n|^2r^{2n}.
$$
By means of this formula, 
since
$f'(z)=\sum_{n=1}^\infty n a_nz^{n-1}$, we find
$$
\Delta(r,f)=\pi \sum_{n=1}^\infty n |a_n|^2r^{2n}.
$$
We call $f$ a Dirichlet-finite function if $\Delta(1,f)$, the area covered by the mapping
$z \to f(z)$ for $|z| < 1$, is finite.
Our interest in this paper was originated by the work of Yamashita~\cite{Yam90} and 
Ponnusamy et. al.~\cite{OPW13,OPW14,PW13}.
Yamashita \cite{Yam90} initially conjectured that
$$\displaystyle \max_{f\in \CC}\Delta \left(r,\frac{z}{f}\right)=\pi r^2
$$
for each $r,\, 0<r\leq 1,$ and the maximum is attained only by the rotations of the function
$l(z)=z/(1-z).$
Here $\CC$ denotes the class of convex functions i.e $f\in\es$ such that $f(\D)$ is convex.
This conjecture was recently settled in \cite{OPW13}. In fact the conjecture has been solved
for a wider class of functions (the class of starlike functions of order $\beta$, $0\le \beta<1$), 
which includes the class $\mathcal{C}$; see also Corollary~\ref{cor3}.
In \cite{PW13}, the Yamashita conjecture problem for the class of $\phi$-spirallike functions of order 
$\beta ~(0\leq \beta <1)$ and $\phi \in (-\pi /2,\pi /2)$ have also been settled 
(see \cite{Lib67} for the definition of $\phi$-spirallike function).
Recent work in this direction can also be found in \cite{OPW14}. There are several other classes
of analytic univalent functions having interesting geometric properties for which solution of the 
Yamashita conjecture problem would be of interesting to readers working in this field.

Our objective in this paper is to give a partial solution to a problem posed in \cite{PW13} 
by considering the following subfamily of the family of starlike functions introduced by 
Padmanabhan \cite{Pad68}.
\begin{definition}\label{def1}
A function $f\in\A$  is said to be in  $\es (\alpha)$, $0<\alpha \le 1$, if
$$\left|\left(\frac{zf'(z)}{f(z)}-1\right){\Big/} \left(\frac{zf'(z)}{f(z)}+1\right) \right|<\alpha,
~~\mbox{ equivalently, }~~
\frac{zf'(z)}{f(z)} \prec \frac{1+\alpha z}{1-\alpha z},
$$
for all $z\in \D$.
\end{definition}
It is evident to see that $\es (\alpha)\subset \es t(\beta).$  
One can also verify that $k_{\beta}(z):=z/(1-z)^{2(1-\beta)}$ belongs to $\es t(\beta),$
whereas, the function  $k_{\alpha}(z):=z/(1-\alpha z)^{2}\in \es (\alpha).$
In this paper, we prove the Yamashita conjecture for functions in a more general family than $\es(\alpha)$.
In particular, the conjecture will also follow for functions in $\es(\alpha)$. The generalization is
now defined below.
 
\begin{definition}\label{def2}
A function $f\in\A$ is said to belong to the class $\es (\alpha, \beta)$, $0<\alpha\leq 1$, $0\leq \beta<1$, if
$$\left|\left(\frac{zf'(z)}{f(z)}-1\right){\Big/} \left(\frac{zf'(z)}{f(z)}+1-2\beta\right) \right|
<\alpha,
$$
i.e.,
\begin{equation}\label{eq1}
\frac{zf'(z)}{f(z)} \prec \frac{1+(1-2\beta)\alpha z}{1-\alpha z},
\end{equation}
for all $z\in \D$.
\end{definition}
A general form of this definition is earlier introduced by Aouf (see \cite[Definition~2]{Aou88}).
Definition of $\es (\alpha, \beta)$ says that the domain values of $\displaystyle {zf'(z)}/{f(z)}$
lie in the disk of radius $ \displaystyle {2\alpha(1-\beta)}/({1-\alpha^2})$
centred at $({1+\alpha^2(1-2\beta)})/({1-\alpha^2})$.
We see that if $\beta =0,$ then Definition \ref{def2} turns into Definition \ref{def1}.
 The function 
\begin{equation}\label{eq2}
 k_{\alpha,\beta}(z):=z(1-\alpha z)^{-2(1-\beta)},\quad  z\in \D,\,  0<\alpha\leq 1, \, 0\leq \beta<1,
\end{equation}
belongs to the family $\es (\alpha, \beta)$ and in this context it plays the role of extremal function 
for $\es (\alpha, \beta)$. Also, one notes that
$$\es (\alpha, \beta)\subset \es (\alpha)
\subset \es t(\beta) \subset \es t;\, 
\es (1, \beta)=\es t(\beta),\, \es (\alpha, 0)=\es (\alpha) ~\mbox{and}~ \es (1)=\es t.
$$
Consequently,
\begin{equation}\label{eq3}
 k_{1,\beta}(z)=k_{\beta}(z),\, k_{\alpha,0}(z)=k_{\alpha}(z),\, k_{1,0}(z)=k_{1}(z)=k(z).
\end{equation}
The images of several sub-disks $\D_r$, $0<r\le 1$, under the functions $k_{\alpha,\beta}(z)$ for different values 
of $\alpha$ and $\beta$ have been described in Figure~1.

\begin{figure}
\begin{minipage}[b]{0.5\textwidth}
\includegraphics[width=6.7cm]{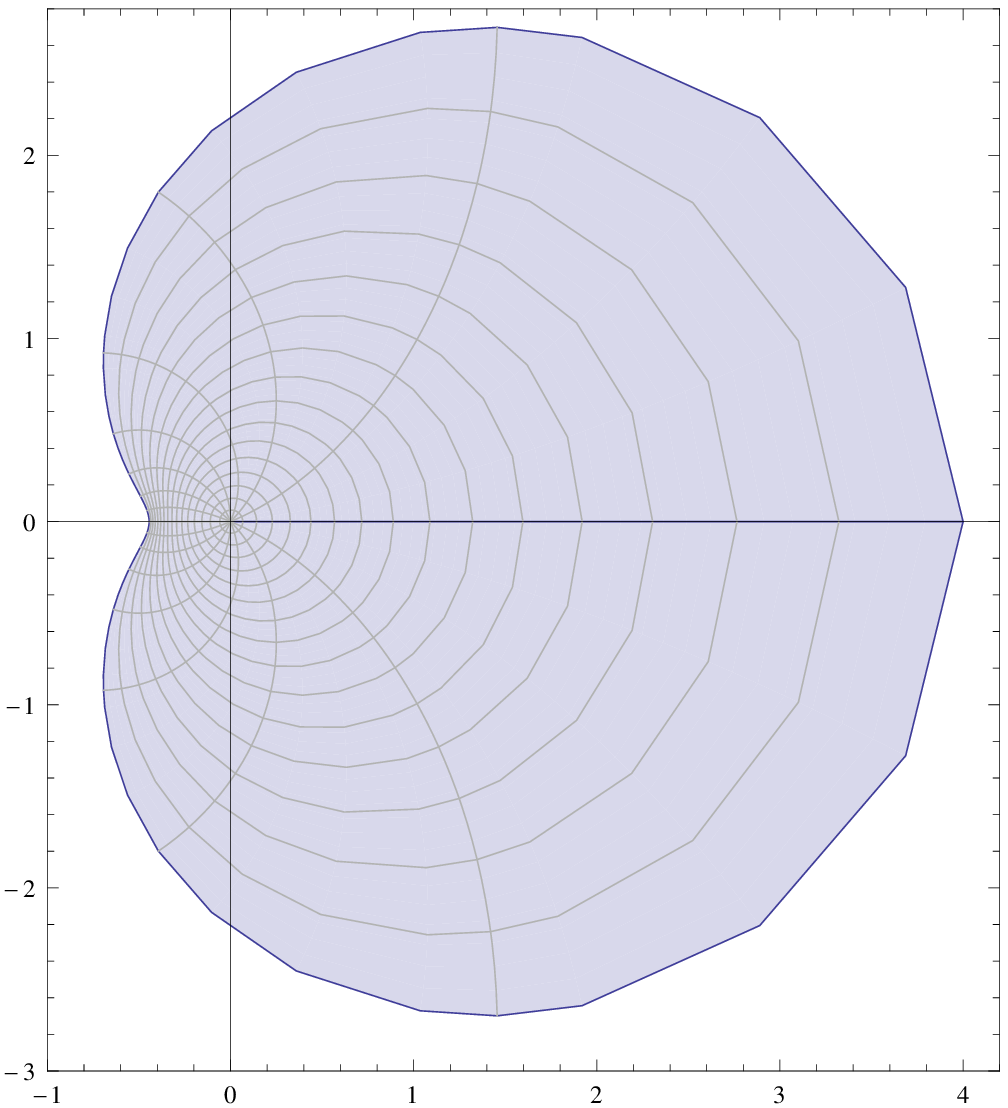}
\hspace*{1cm}The image domain $k_{1/2,0}(\D)$
\end{minipage}
\begin{minipage}[b]{0.45\textwidth}
\includegraphics[width=6.7cm]{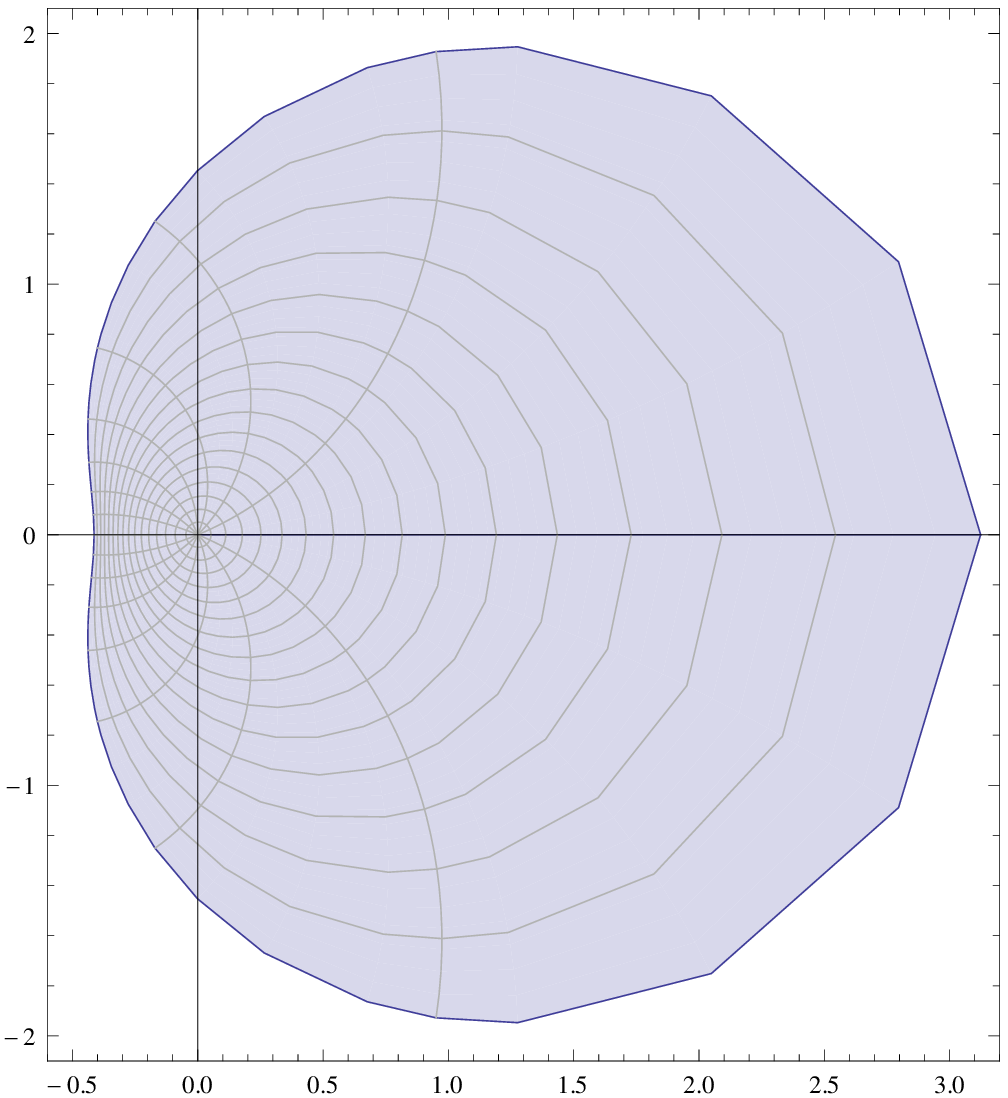}
\hspace*{.3cm} The image domain $k_{4/5,1/3}(\D_{0.8})$
\end{minipage}
\caption{Images of the disks $\D$ and $\D_{0.8}$ under $k_{1/2,0}$ and $k_{4/5,1/3}$.}
\end{figure}

In this article, our main aim is to examine the maximum area problem for the functions of type $(z/f)$ 
so-called the Yamashita conjecture problem, when $f\in \es (\alpha,\beta)$. By looking into the behavior of 
$z/k_{\alpha,\beta}(z)$ (see Section~3), we expect the following theorem whose proof is given in Section~3.

\begin{theorem}\label{thm1}{\bf (Main Theorem)}
Let $0<\alpha\leq 1$ and $0\leq \beta <1$. If $f(z)\in \es (\alpha,\beta)$
and $z/f(z)$ is a non-vanishing analytic function in $\D$,
then we have the maximal area
\begin{align*}
 \displaystyle \max_{f\in \es (\alpha,\beta)} \Delta \left(\rho,\frac{z}{f}\right)
 & = 4\pi{\alpha}^2(1-\beta)^2{\rho}^2 {}_2F_1(2\beta-1,2\beta-1;2;{\alpha}^2{\rho}^2),\quad |z|<\rho\\
 & =:A_{\alpha,\beta}(\rho)
 \end{align*} 
  for all $\rho, \, 0<\rho\leq 1$. The maximum is attained only by the rotations of the function 
$ k_{\alpha,\beta}(z)$ defined by $(\ref{eq2}).$
\end{theorem}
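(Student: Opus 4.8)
The plan is to pass from the area to a coefficient sum by the Parseval--Gutzmer formula, to record an integral representation of $z/f$ coming from the convexity of the dominant in $(\ref{eq1})$, to isolate one sharp estimate for that coefficient sum with $z/k_{\alpha,\beta}(z)=(1-\alpha z)^{2(1-\beta)}$ as the model, and finally to put the extremal value into closed form. To begin, since $z/f$ is non-vanishing in $\D$ and equals $1$ at the origin, write $z/f(z)=1+\sum_{n\ge1}c_nz^n$. By the Parseval--Gutzmer formula recalled in the Introduction,
\[
\Delta\!\left(\rho,\frac{z}{f}\right)=\pi\sum_{n\ge1}n\,|c_n|^2\rho^{2n}\qquad(0<\rho\le1).
\]
Since $z/k_{\alpha,\beta}(z)=(1-\alpha z)^{2(1-\beta)}$ has Taylor coefficients $c_n^{*}=\binom{2(1-\beta)}{n}(-\alpha)^n$, and (verified in the last paragraph) $\pi\sum_{n\ge1}n|c_n^{*}|^2\rho^{2n}=A_{\alpha,\beta}(\rho)$, the theorem is equivalent to
\[
\sum_{n\ge1}n\,|c_n|^2\rho^{2n}\ \le\ \sum_{n\ge1}n\,|c_n^{*}|^2\rho^{2n}\qquad(0<\rho\le1),
\]
together with the determination of the equality case.

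Next I would exploit the shape of $\es(\alpha,\beta)$. The dominant $q(z)=(1+(1-2\beta)\alpha z)/(1-\alpha z)$ is a M\"obius map carrying $\D$ univalently onto a disk, so each $f\in\es(\alpha,\beta)$ admits a Herglotz-type representation $zf'(z)/f(z)=\int_{\partial\D}q(xz)\,d\mu(x)$ for some probability measure $\mu$ on $\partial\D$. Since $\big(zf'(z)/f(z)-1\big)/z$ is the logarithmic derivative of $f(z)/z$, integrating gives
\[
\frac{z}{f(z)}=\exp\!\left(2(1-\beta)\int_{\partial\D}\Log(1-\alpha xz)\,d\mu(x)\right),
\]
so $z/f$ is a ``geometric mean'' of the rotations $g_x(z):=(1-\alpha xz)^{2(1-\beta)}$ of $z/k_{\alpha,\beta}$, the model being the case in which $\mu$ is a unit point mass. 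From Jensen's inequality, $|z/f(z)|\le\int_{\partial\D}|1-\alpha xz|^{2(1-\beta)}\,d\mu(x)$; applying Cauchy--Schwarz in $\mu$, integrating over $|z|=\rho$, and using $|x|=1$ then gives the \emph{unweighted} bound $\sum_{n\ge1}|c_n|^2\rho^{2n}\le\sum_{n\ge1}|c_n^{*}|^2\rho^{2n}$ — which is just Littlewood's subordination theorem for $z/f\prec z/k_{\alpha,\beta}$.

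The hard part is that this unweighted bound is not sufficient: the factor $n$ is genuinely present, the two coefficient sequences are not comparable term-by-term (already for $\mu=\tfrac12(\delta_1+\delta_{-1})$ one has $|c_2|>|c_2^{*}|$), and crude pointwise majorization of $(z/f)'$ fails at the endpoint $\rho=1$, $\alpha=1$. I would obtain the required weighted inequality by extending the coefficient argument of~\cite{OPW13}, which settles the case $\alpha=1$: writing $z/f=\exp\big(\sum_{n\ge1}b_nz^n\big)$ one has $\sum_{n\ge1}nb_nz^n=1-zf'(z)/f(z)$, subordinate to the convex function $-2(1-\beta)\alpha z/(1-\alpha z)$; the recursion $nc_n=\sum_{k=1}^{n}(kb_k)c_{n-k}$, together with the Rogosinski and Littlewood bounds on the $b_k$ forced by that subordination, then allows a comparison of $\sum n|c_n|^2\rho^{2n}$ with the same sum for $z/k_{\alpha,\beta}$. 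This is the step I expect to demand the real work, and I would run it keeping track of the equality conditions, which force $\mu$ to be a point mass.

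Finally, the closed form. Using $\binom{2(1-\beta)}{n}=(-1)^n(2\beta-2)_n/n!$, substituting $n=m+1$, and the reductions $(2\beta-2)_{m+1}=(2\beta-2)(2\beta-1)_m$ and $(m+1)(m!)^2=(2)_m(1)_m$, the series $\pi\sum_{n\ge1}n|c_n^{*}|^2\rho^{2n}$ collapses, after recognizing $\sum_{m\ge0}\frac{(2\beta-1)_m^2}{(2)_m(1)_m}(\alpha^2\rho^2)^m$ as a Gaussian hypergeometric series, to
\[
\pi\sum_{n\ge1}n\,|c_n^{*}|^2\rho^{2n}=4\pi\alpha^2(1-\beta)^2\rho^2\,{}_2F_1(2\beta-1,2\beta-1;2;\alpha^2\rho^2)=A_{\alpha,\beta}(\rho),
\]
so $A_{\alpha,\beta}(\rho)$ is exactly the Dirichlet integral of $z/k_{\alpha,\beta}$ over $\D_\rho$. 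Combining this with the weighted inequality above proves the asserted maximum, and equality there forces $\mu=\delta_{x_0}$ with $|x_0|=1$, i.e. $f$ is a rotation of $k_{\alpha,\beta}$; this is the extremality statement.
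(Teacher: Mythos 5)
Your setup is sound and matches the paper's reduction: Parseval--Gutzmer converts the area into $\pi\sum n|c_n|^2\rho^{2n}$, the subordination $z/f \prec (1-\alpha z)^{2(1-\beta)}$ is correct (the paper gets it via Lemma~\ref{lem3}), your counterexample showing that term-by-term comparison of $|c_n|$ with $|c_n^{*}|$ fails is a genuine and relevant observation, and your closed-form evaluation of $\pi\sum n|c_n^{*}|^2\rho^{2n}$ as $A_{\alpha,\beta}(\rho)$ is correct. But the proposal has a genuine gap precisely where the theorem lives: the weighted inequality $\sum_n n|c_n|^2\rho^{2n}\le\sum_n n|c_n^{*}|^2\rho^{2n}$ for \emph{all} $\rho\in(0,1]$ is never proved. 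You correctly note that Littlewood's subordination theorem only gives the unweighted version, and then defer the weighted version to ``extending the coefficient argument of [OPW13]'' via the convolution recursion $nc_n=\sum_{k=1}^n (kb_k)c_{n-k}$ with Rogosinski/Littlewood bounds on the $b_k$. That is not the mechanism of \cite{OPW13}, and there is no indication that a recursion-plus-triangle-inequality argument can produce the sharp weighted bound (it would again lose the phase information that your own $\mu=\tfrac12(\delta_1+\delta_{-1})$ example shows is essential). The paper's actual route is: (i) Goluzin's extension of Rogosinski's theorem handles $0<\rho\le 1/\sqrt2$, where $\{n\rho^{2n}\}$ is decreasing; (ii) for $\rho>1/\sqrt2$ one needs the partial-sum inequality of Lemma~\ref{lem2}, which is obtained by first deriving the Clunie-type coefficient inequality (\ref{eq6}) from the subordination of $zf'/f$, and then combining the inequalities for $n=1,\dots,N$ with multipliers $\lambda_{n,N}$ determined by Cramer's rule, whose nonnegativity is the real technical content (carried over from \cite{OPW13}). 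None of this is present in your sketch.

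A second, independent problem is the Herglotz-type representation $zf'(z)/f(z)=\int_{\partial\D}q(xz)\,d\mu(x)$. For $\alpha<1$ the dominant $q$ maps $\D$ onto a disk, not a half-plane, and subordination to $q$ is strictly weaker than membership in the closed convex hull of the rotations $q(xz)$: for instance $f$ with $zf'/f=q(\alpha z^2)$ lies in $\es(\alpha,\beta)$, but matching its second Taylor coefficient against $\int q(xz)\,d\mu(x)$ forces $\int x^2\,d\mu=1/\alpha>1$, which is impossible. So the ``geometric mean of rotations'' picture is false in general, and with it the equality analysis ``$\mu$ must be a point mass''; the uniqueness of the extremal (rotations of $k_{\alpha,\beta}$) instead has to be extracted from the equality cases in the Clunie/Cramer argument, as in \cite{OPW13}.
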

 This generalizes the main results which are discussed in \cite{OPW13} and \cite{Yam90}.
 
In Section~2, we prepare some basic results and use them to prove our main theorem in Section~3.

\section{\bf Preparatory Results}  

If $f\in \es$ then $z/f(z)$ is non-vanishing in $\D$. Hence, it can be described as Taylor's series of the form
\begin{equation}\label{eq-1-sec.2}
\frac{z}{f(z)}=1+\sum_{n=1}^{\infty}b_nz^n, \, z\in \D.
\end{equation}

We first derive a coefficient estimate in series form for a function $f$ of the form (\ref{eq-1-sec.2})
when $f\in \es (\alpha,\beta).$
\begin{lemma}\label{lem1}
Let $0<\alpha\leq 1$, $0\leq \beta <1$, and $f(z)\in \es (\alpha,\beta)$. If
$g(z)$ is a non-vanishing analytic function in $\D$ of the form $(\ref{eq-1-sec.2})$, then it
necessarily satisfies the coefficient inequality
$$\sum_{k=1}^{\infty} \left(k^2-(k-2(1-\beta))^2 \alpha^2 \right)|b_k|^2
\leq 4(1-\beta)^2 \alpha^2.
$$
\end{lemma}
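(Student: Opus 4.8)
The plan is to convert membership of $f$ in $\es(\alpha,\beta)$ into a single pointwise estimate relating $g$ and $g'$, then to extract the coefficient inequality by integrating over the circles $|z|=r$, invoking the Parseval--Gutzmer formula, and finally letting $r\to1$.

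First, write $g(z)=z/f(z)=1+\sum_{n=1}^{\infty}b_nz^n$; since $f$ is univalent ($\es(\alpha,\beta)\subset\es$), $g$ is analytic and non-vanishing in $\D$. Logarithmic differentiation of $f(z)=z/g(z)$ gives $zf'(z)/f(z)=1-zg'(z)/g(z)$, so in the defining quotient of Definition~\ref{def2} the numerator $zf'/f-1$ equals $-zg'/g$ and the denominator $zf'/f+1-2\beta$ equals $2(1-\beta)-zg'/g$ (which never vanishes, since the disk in $(\ref{eq1})$ omits the value $2\beta-1$). Multiplying numerator and denominator by $g(z)$ and clearing, the inequality of Definition~\ref{def2} becomes equivalent to
$$
|zg'(z)|\le\alpha\,\bigl|zg'(z)-2(1-\beta)g(z)\bigr|,\qquad z\in\D .
$$

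Next, fix $r$ with $0<r<1$. Squaring the last inequality on $|z|=r$ and integrating in $\theta$, apply the Parseval--Gutzmer formula to the two functions $zg'(z)=\sum_{n=1}^{\infty}nb_nz^n$ and $zg'(z)-2(1-\beta)g(z)=-2(1-\beta)+\sum_{n=1}^{\infty}\bigl(n-2(1-\beta)\bigr)b_nz^n$, both analytic on $\overline{\D}_r$. This turns the integrated estimate into
$$
\sum_{n=1}^{\infty}n^2|b_n|^2r^{2n}\le\alpha^2\Bigl(4(1-\beta)^2+\sum_{n=1}^{\infty}\bigl(n-2(1-\beta)\bigr)^2|b_n|^2r^{2n}\Bigr),
$$
that is,
$$
\sum_{n=1}^{\infty}\Bigl(n^2-\bigl(n-2(1-\beta)\bigr)^2\alpha^2\Bigr)|b_n|^2r^{2n}\le4(1-\beta)^2\alpha^2 .
$$

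To finish, observe that every coefficient on the left is non-negative: for $n\ge1$, $0<\alpha\le1$, $0\le\beta<1$ one has $|n-2(1-\beta)|\le n$ (for $n=1$ this reads $|2\beta-1|\le1$; for $n\ge2$ it reads $0\le n-2+2\beta<n$), hence $n^2-(n-2(1-\beta))^2\alpha^2\ge n^2(1-\alpha^2)\ge0$. Therefore the left-hand side above is non-decreasing in $r$ and stays $\le4(1-\beta)^2\alpha^2$ for all $r<1$; letting $r\to1^-$ (monotone convergence) and renaming $n$ as $k$ gives the asserted inequality. There is no substantial obstacle here; the only real content is the algebraic collapse in the second step, whereby, once the defining condition is rewritten as $|zg'|\le\alpha|zg'-2(1-\beta)g|$, the Parseval norms of the two sides assemble precisely into the claimed coefficient functional (with the constant term $-2(1-\beta)$ producing the constant $4(1-\beta)^2\alpha^2$). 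The passage to the limit $r\to1$ is the only point needing (minor) care, and it is covered by the sign check just made.
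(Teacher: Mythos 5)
Your proof is correct, but it reaches the inequality by a genuinely different route than the paper. The paper introduces the Schwarz-type function $w$ from the subordination (\ref{eq1}), writes $zg'(z)=\alpha\bigl(-2(1-\beta)+\sum_{k\ge1}(k-2(1-\beta))b_kz^k\bigr)w(z)$, and applies Clunie's truncation method to obtain the \emph{partial-sum} inequality (\ref{lem-eq1}), from which the lemma follows by setting $\rho=1$ and letting $n\to\infty$. You instead convert the defining condition into the pointwise bound $|zg'(z)|\le\alpha\,|zg'(z)-2(1-\beta)g(z)|$, integrate over $|z|=r$ with Parseval--Gutzmer, and pass to the limit $r\to1^-$ using the non-negativity of the coefficients $k^2-(k-2(1-\beta))^2\alpha^2$ (your verification of that sign, and the remark that the denominator $zf'/f+1-2\beta$ cannot vanish since the subordinating disk omits $2\beta-1$, are both correct and are points the paper glosses over). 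For the statement of this lemma your argument is simpler and entirely adequate. What it does not deliver is the truncated inequality (\ref{lem-eq1}) itself, with the index lag between the two sums, which the paper obtains for free from Clunie's method and then reuses as Step~I in the proof of Lemma~\ref{lem2}; a plain Parseval integration of the pointwise bound only yields the full-series version. So your approach proves Lemma~\ref{lem1} cleanly but would have to be supplemented by the Clunie argument anyway when the paper moves on to Lemma~\ref{lem2}.
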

\begin{proof}
Let $g(z)={z}/{f(z)}$ be of the form (\ref{eq-1-sec.2}). Note that the logarithmic derivative gives
$$\frac{zg'(z)}{g(z)}=1-\frac{zf'(z)}{f(z)}.
$$ 
Since $f(z)\in \es (\alpha,\beta)$, the subordination relation (\ref{eq1}) says that 
there exists an analytic function $ w:\D \rightarrow \overline{\D}$  with $w(0)=1$ such that
$$\frac{zf'(z)}{f(z)}=\frac{1+\alpha(1-2\beta)zw(z)}{1-\alpha zw(z)}, \, z\in \D , 
$$
and hence
$$\frac{g'(z)}{g(z)}=\frac{-2\alpha(1-\beta)w(z)}{1-\alpha zw(z)}.
$$
Writing this in series form, we get
$$\sum_{k=1}^{\infty}kb_kz^{k-1}=\alpha\left(-2(1-\beta)+\sum_{k=1}^{\infty}(k-2(1-\beta))b_kz^k\right)w(z).
$$
After a minor re-arrangement, we obtain
\begin{align*}
\sum_{k=1}^{n}kb_kz^{k-1}+\sum_{k=n+1}^{\infty}kb_kz^{k-1} 
& =\alpha\left(-2(1-\beta)+\sum_{k=1}^{n-1}(k-2(1-\beta))b_kz^k\right)w(z) \\
& + \alpha\left(\sum_{k=n}^{\infty}(k-2(1-\beta))b_kz^k\right)w(z). 
\end{align*}
By Clunie's method \cite{Clu59} (see also \cite{CK60,Rob70,Rog43}), 
we obtain
$$\sum_{k=1}^{n}k^2|b_k|^2{\rho}^{2k-2}\leq {\alpha}^2\left(4(1-\beta)^2
+\sum_{k=1}^{n-1}\left(k-2(1-\beta)\right)^2|b_k|^2{\rho}^{2k}\right),
$$
equivalently,
\begin{equation}\label{lem-eq1}
\sum_{k=1}^{n} k^2 |b_k|^2{\rho}^{2k-2}
 -\alpha^2 \sum_{k=1}^{n-1} (k-2(1-\beta))^2  |b_k|^2{\rho}^{2k}
 \leq 4(1-\beta)^2 \alpha^2.
\end{equation}
If we take $\rho=1$ and allow $n\rightarrow \infty$, then we obtain the desired inequality
$$\sum_{k=1}^{\infty} \left(k^2-(k-2(1-\beta))^2 \alpha^2 \right)|b_k|^2
\leq 4(1-\beta)^2 \alpha^2.
$$
The proof of our lemma is now complete.
\end{proof}

We remark that the special choices $\alpha=1$ and $\beta=0$ turned Lemma~\ref{lem1} into the well-known
Area Theorem for $f\in \es$ (see for instance \cite[Theorem~11, p. 193 of Vol-2]{Good83}).


We now prepare a lemma using a new technique introduced in \cite{OPW13} and this lemma plays an 
important role to prove our main theorem in this paper.

\begin{lemma}\label{lem2}
Let $0<\alpha\leq 1,\, 0\leq \beta<1$, and $f(z)\in \es (\alpha,\beta)$. For $|z|<\rho\le 1$ suppose that
$$ \frac{z}{f(z)}= 1+ \sum_{k=1}^{\infty}b_kz^k ~\mbox{and}~\ (1+\alpha z)^{2-2\beta}
=1+ \sum_{k=1}^{\infty}{(-1)}^kc_kz^k. 
$$
Then the relation
\begin{equation}\label{eq5}
\sum_{k=1}^N k|b_k|^2 {\rho}^{2k} \leq \sum_{k=1}^{N}k|c_k|^2 {\rho}^{2k}
\end{equation}
is valid for all $N\in \mathbb{N}$.
\end{lemma}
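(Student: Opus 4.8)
The plan is to compare the Taylor coefficients of $z/f(z)$ with those of $(1+\alpha z)^{2-2\beta}$ by exploiting the extremality of $k_{\alpha,\beta}$ and running an inductive argument on $N$. First I would observe that $z/k_{\alpha,\beta}(z) = (1-\alpha z)^{2-2\beta}$, so the coefficients $c_k$ in the statement are (up to the sign bookkeeping $(-1)^k$) precisely the coefficients of $z/k_{\alpha,\beta}$; thus $\sum k|c_k|^2\rho^{2k}$ is, modulo the factor $\pi$, the area $\Delta(\rho, z/k_{\alpha,\beta})$ that Theorem~\ref{thm1} predicts to be maximal. The substance of the lemma is therefore a \emph{partial-sum} sharpening of that maximality, and the natural engine is the coefficient inequality of Lemma~\ref{lem1}, or rather its finite-$n$ precursor \eqref{lem-eq1}, which already has the telescoping shape $\sum_{k\le n} k^2|b_k|^2\rho^{2k-2} - \alpha^2\sum_{k\le n-1}(k-2(1-\beta))^2|b_k|^2\rho^{2k} \le 4(1-\beta)^2\alpha^2$.

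The key steps, in order: (1) record that the $c_k$ satisfy the \emph{equality} version of \eqref{lem-eq1} for every $n$, since $k_{\alpha,\beta}$ attains equality in the Clunie step (here $w\equiv 1$); this gives a recursion expressing $n^2|c_n|^2$ in terms of lower-order $|c_k|^2$. (2) Induct on $N$. For the inductive step, assume $\sum_{k=1}^{N-1} k|b_k|^2\rho^{2k} \le \sum_{k=1}^{N-1} k|c_k|^2\rho^{2k}$ and also the pointwise-in-$\rho$ strengthening one actually needs, namely control of each partial sum $\sum_{k=1}^{m} k|b_k|^2\rho^{2k}$ for $m<N$. Use \eqref{lem-eq1} with $n=N$ and $\rho$ arbitrary to bound $N^2|b_N|^2\rho^{2N-2}$ by $4(1-\beta)^2\alpha^2 + \alpha^2\sum_{k=1}^{N-1}(k-2(1-\beta))^2|b_k|^2\rho^{2k}$, and do the same (with equality) for $c_N$. (3) Subtract: $N^2(|b_N|^2-|c_N|^2)\rho^{2N-2} \le \alpha^2\sum_{k=1}^{N-1}(k-2(1-\beta))^2(|b_k|^2-|c_k|^2)\rho^{2k}$. (4) Convert the target difference $D_N(\rho):=\sum_{k=1}^{N} k(|c_k|^2-|b_k|^2)\rho^{2k}$ into something the induction feeds by multiplying the step-(3) inequality by $\rho^2/N$ and adding $D_{N-1}(\rho)\ge 0$; the coefficient bookkeeping $(k-2(1-\beta))^2\alpha^2 \le k^2$ (valid since $0<\alpha\le1$, which is exactly the positivity in Lemma~\ref{lem1}) should let the lower-order terms be absorbed with the right sign.

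The main obstacle I expect is precisely this last absorption: showing that the weighted lower-order differences on the right of step (3), once carried through the division by $N$ and the powers of $\rho$, are dominated by the inductive quantity $D_{N-1}(\rho)=\sum_{k=1}^{N-1}k(|c_k|^2-|b_k|^2)\rho^{2k}$. This is not purely formal because the weights $(k-2(1-\beta))^2\alpha^2/N$ do not match the weights $k$ appearing in $D_{N-1}$, so one must either strengthen the induction hypothesis to a family of inequalities (one for each partial sum, with $\rho$ as a free parameter, e.g. $\sum_{k=1}^{m}(k - \lambda)|b_k|^2\rho^{2k} \le \sum_{k=1}^{m}(k-\lambda)|c_k|^2\rho^{2k}$ for suitable $\lambda$) or use an Abel summation to repackage $D_{N-1}$ against the available weights. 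I would set up the induction hypothesis in the stronger, $\rho$-parametrized form from the start so that the comparison in step (4) becomes a term-by-term consequence of the previous case together with the nonnegativity of the coefficients $k^2-(k-2(1-\beta))^2\alpha^2$; the hypotheses $0<\alpha\le 1$ and $0\le\beta<1$ are used exactly to keep every such coefficient nonnegative and to keep $k-2(1-\beta)$ under control for small $k$.
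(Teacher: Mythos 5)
Your starting point coincides with the paper's: the finite-$n$ Clunie inequality \eqref{lem-eq1} (equivalently \eqref{eq6}), together with the observation that the coefficients of $(1-\alpha z)^{2(1-\beta)}$ attain equality in it for every $n$. But the argument breaks down at your steps (3)--(4), and the obstacle you yourself flag is not a technicality --- it is the entire content of the lemma. First, the inequality in your step (3) is not a valid consequence of \eqref{lem-eq1}: to isolate $N^2|b_N|^2\rho^{2N-2}$ you must carry the terms $-\sum_{k=1}^{N-1}k^2|b_k|^2\rho^{2k-2}$ to the right-hand side, and after subtracting the corresponding \emph{equality} for the $c_k$ the right side involves the differences $|b_k|^2-|c_k|^2$ weighted by $\alpha^2(k-2(1-\beta))^2\rho^2-k^2$, not merely by $\alpha^2(k-2(1-\beta))^2$; you cannot discard the $k^2$ part because the individual differences $|b_k|^2-|c_k|^2$ have no definite sign (only certain weighted sums of them are controlled by the Clunie inequalities). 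Second, and for the same reason, the ``absorption'' into $D_{N-1}(\rho)\ge 0$ cannot be done term by term, and the strengthened induction hypothesis you gesture at (a family of inequalities $\sum_{k\le m}(k-\lambda)|b_k|^2\rho^{2k}\le\sum_{k\le m}(k-\lambda)|c_k|^2\rho^{2k}$ for ``suitable $\lambda$'') is neither shown to hold in a base case, nor shown to propagate, nor even pinned down.

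What is actually needed --- and what the paper does --- is to exhibit explicit multipliers $\lambda_{1,N},\dots,\lambda_{N,N}$ such that the $\lambda_{n,N}$-weighted sum of the left-hand sides of \eqref{eq6} for $n=1,\dots,N$ reproduces exactly $\sum_{k=1}^{N}k|b_k|^2\rho^{2k}$; this forces the upper-triangular linear system \eqref{eq7}, solved by Cramer's rule. The inequality \eqref{eq5} then follows from the equality case for the $c_k$ \emph{provided all the $\lambda_{n,N}$ are nonnegative}, and that nonnegativity is the genuinely hard step: it rests on the determinant sign analysis of \cite{OPW13}, to which the paper reduces by substituting $r^2=\alpha^2\rho^2$ and $\gamma=2(1-\beta)$ in \eqref{eq7}. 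Your sketch contains no substitute for this positivity argument, so the proof is incomplete at precisely the point where the work lies.
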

\begin{proof}
We divide our proof into three steps.

{\bf Step-I: Clunie's method.}

Rewrite (\ref{lem-eq1}) in the following form:
$$
\sum_{k=1}^{n-1} \left(k^2-(k-2(1-\beta))^2 \alpha^2{\rho}^2 \right)|b_k|^2{\rho}^{2k-2}+n^2|b_n|^2{\rho}^{2n-2}
\leq 4(1-\beta)^2 \alpha^2.
$$
Multiply by $\rho^2$ on both sides we obtain 
\begin{equation}\label{eq6}
\sum_{k=1}^{n-1} \left(k^2-(k-2(1-\beta))^2 \alpha^2{\rho}^2 \right)|b_k|^2{\rho}^{2k}+n^2|b_n|^2{\rho}^{2n}
\leq 4(1-\beta)^2\alpha^2 \rho^2.
\end{equation}
The function $b(z)=(1+\alpha z)^{2-2\beta}$ clearly shows that the equality in $(\ref{eq6})$
attains with $b_k=(-1)^kc_k.$

{\bf Step-II: Cramer's Rule.}

We consider the inequalities (\ref{eq6}) for $n = 1,\ldots,N$, and multiply the $n$-th coefficient
by a factor $\lambda_{n,N}$ for each $n$. These factors are chosen in such a way that the addition of the left sides
of the modified inequalities is equivalent to the left side of (\ref{eq5}). The calculation of the
factors $\lambda_{n,N}$ leads to the following system of linear equations:
\begin{equation}\label{eq7}
k=k^2\lambda_{k,N}+\sum_{n=k+1}^{N} {\lambda}_{n,N} \left(k^2-(k-2(1-\beta))^2 \alpha^2{\rho}^2 \right),
\, k = 1, \cdots , N.
\end{equation}
Since the matrix of the system (\ref{eq7}) is an upper triangular matrix with positive integers
as diagonal elements, the solution of the system is uniquely determined.
Cramer's rule allows us to write the solution of the system (\ref{eq7}) in the form
$$\lambda_{n,N}=\frac{((n-1)\ !)^2}{(N\ ! )^2} 
\mbox{Det$~A_{n,N}$},
$$
where $A_{n,N}$ is the $(N-n+1)\times(N-n+1)$ matrix constructed as follows:
$$A_{n,N}=\left[\begin{array}{cccc}
n & n^2-(n-2(1-\beta))^2\alpha^2 \rho^2 & \cdots & n^2-(n-2(1-\beta))^2\alpha^2 \rho^2\\
n+1 & (n+1)^2 & \cdots & (n+1)^2 -(n+1-2(1-\beta))^2\alpha^2 \rho^2\\
\vdots & \vdots & \vdots & \vdots \\
N & 0 & \cdots & N^2\\
\end{array} \right]
$$
If we replace $\alpha^2 \rho^2$ by $r^2$ and $2(1-\beta)$ by $\gamma$ in (\ref{eq7}), 
then the equation is equivalent to \cite[(8)]{OPW13}. The rest of the proof now similarly follows as explained 
in \cite{OPW13}. 
\end{proof}

We now establish a preliminary result concerning necessary and sufficient conditions for a function 
to be in $\es (\alpha,\beta).$
\begin{lemma}\label{lem3}
Let $0<\alpha\leq 1 $ and $0\leq \beta <1$. Then $f(z)\in \es (\alpha,\beta)$ if and only if $ F$
 defined by $F(z)=z\left({f(z)}/{z}\right)^{\frac{1}{1-\beta}} \in \es (\alpha), \,z\in \D.$
\end{lemma}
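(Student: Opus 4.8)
The plan is to reduce the two membership conditions to one another by means of a single logarithmic-derivative identity. First I would make sure $F$ is a bona fide element of $\A$. In either direction of the asserted equivalence the function one starts from lies in the starlike family, since $\es(\alpha,\beta)\subset\es t$ and $\es(\alpha)\subset\es t$, hence is univalent; consequently $f(z)/z$ (respectively $F(z)/z$) is a zero-free analytic function on the simply connected domain $\D$ with value $1$ at the origin. Therefore the branch of $(f(z)/z)^{1/(1-\beta)}$ normalized to equal $1$ at $z=0$ is single-valued and analytic on $\D$, so $F(z)=z(f(z)/z)^{1/(1-\beta)}\in\A$; and conversely the relation $f(z)/z=(F(z)/z)^{1-\beta}$ shows $f\in\A$. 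This branch point is the only place that asks for a word of care; the rest is formal.

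Next I would record the key identity. Taking the logarithmic derivative of $F(z)=z(f(z)/z)^{1/(1-\beta)}$ gives
$$\frac{zF'(z)}{F(z)}=1+\frac{1}{1-\beta}\left(\frac{zf'(z)}{f(z)}-1\right),$$
which rearranges to
$$\frac{zf'(z)}{f(z)}=(1-\beta)\,\frac{zF'(z)}{F(z)}+\beta.$$
Writing $T(w):=(1-\beta)w+\beta$, this says $zf'(z)/f(z)=T\bigl(zF'(z)/F(z)\bigr)$, where $T$ is an affine, hence entire and univalent, self-map of $\IC$ with inverse $T^{-1}(w)=(w-\beta)/(1-\beta)$.

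Finally I would translate the identity into the language of subordination used in Definitions~\ref{def1} and~\ref{def2}. A direct computation gives
$$T\!\left(\frac{1+\alpha z}{1-\alpha z}\right)=\frac{(1-\beta)(1+\alpha z)+\beta(1-\alpha z)}{1-\alpha z}=\frac{1+(1-2\beta)\alpha z}{1-\alpha z}.$$
Since subordination is preserved under post-composition with any function analytic on the range in question, and since here $T$ is invertible with $T^{-1}$ also analytic, one obtains the chain
$$f\in\es(\alpha,\beta)\iff\frac{zf'(z)}{f(z)}\prec\frac{1+(1-2\beta)\alpha z}{1-\alpha z}\iff\frac{zF'(z)}{F(z)}\prec\frac{1+\alpha z}{1-\alpha z}\iff F\in\es(\alpha),$$
the middle equivalence being exactly $zf'(z)/f(z)=T\bigl(zF'(z)/F(z)\bigr)$ combined with the displayed image of $(1+\alpha z)/(1-\alpha z)$ under $T$.

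As for the main obstacle: there is no substantive one. The entire content is the affine relation between $zf'/f$ and $zF'/F$ extracted from the logarithmic derivative, and the only subtlety worth stating explicitly is the well-definedness of the fractional power as an element of $\A$, which rests on the non-vanishing of $f/z$ and $F/z$ guaranteed by starlikeness (hence univalence).
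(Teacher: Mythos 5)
Your proposal is correct and follows essentially the same route as the paper: both rest on the logarithmic-derivative identity $zF'(z)/F(z)=1+\tfrac{1}{1-\beta}\left(zf'(z)/f(z)-1\right)$, and your conjugation of the subordination by the affine map $T(w)=(1-\beta)w+\beta$ is just a rephrasing of the paper's ``componendo and dividendo'' step, which equates $\left|\bigl(zF'/F-1\bigr)/\bigl(zF'/F+1\bigr)\right|$ with $\left|\bigl(zf'/f-1\bigr)/\bigl(zf'/f+1-2\beta\bigr)\right|$. Your extra remark on the single-valuedness of the branch of $(f(z)/z)^{1/(1-\beta)}$ is a point the paper leaves implicit, but it does not change the argument.
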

\begin{proof}
Let $F(z)=z\left({f(z)}/{z}\right)^{\frac{1}{1-\beta}}.$
Taking logarithm derivative on both sides and simplify, we get
$$
z\frac{F'(z)}{F(z)}=1+\frac{1}{1-\beta} \left(z\frac{f'(z)}{f(z)}-1\right).
$$
By componendo and dividendo rule, we have 
\begin{equation}\label{eq4}
 \left |\left(z\frac{F'(z)}{F(z)}-1\right){\Big /} \left(z\frac{F'(z)}{F(z)}+1\right) \right|
 =\left |\left(z\frac{f	'(z)}{f(z)}-1\right){\Big /} \left(z\frac{f'(z)}{f(z)}+1-2\beta\right) \right|.
\end{equation}
By Definition~\ref{def2} we get $f(z)\in \es (\alpha,\beta)$ if and only if 
$F(z)\in \es (\alpha)$.
\end{proof}

\section{\bf Proof of the Main Theorem}  

As an initial observation, from (\ref{eq2}) we see that
$$\frac{z}{k_{\alpha,\beta}(z)} =(1-\alpha z)^{2(1-\beta)}
= 1+\sum_{n=1}^\infty c_nz^n
$$
where $\displaystyle c_n= \frac{(\zeta)_n}{(1)_n} {\alpha}^n$ and $\zeta =-2(1-\beta).$

Hence, we apply the area formula for the function $\displaystyle z/k_{\alpha,\beta}(z)$ and obtain
\begin{align*}
\pi^{-1}\Delta\left(\rho,\frac{z}{k_{\alpha,\beta}}\right) & =
\sum_{n=1}^{\infty}n|c_n|^2 {\rho}^{2n}, \quad |z|<\rho \\
 & = \sum_{n=1}^{\infty}n \frac{(\zeta)_n(\zeta)_n}{(1)_n(1)_n}{\alpha}^{2n}{\rho}^{2n} \\
& = {\zeta}^2{\alpha}^2{\rho}^2\sum_{n=0}^{\infty} 
\frac{(\zeta+1 )_n(\zeta+1)_n}{(2)_n(1)_n}{\alpha}^{2n}{\rho}^{2n} \\ 
& =  4{\alpha}^2(1-\beta)^2{\rho}^2 {}_2F_1(2\beta-1,2\beta-1;2;{\alpha}^2{\rho}^2)\\
 & = {\pi}^{-1}A_{\alpha,\beta}(\rho).
\end{align*}
At this point let us write $A_{\alpha,\beta}(\rho)$, $0<\rho\leq 1$, in the following form:
$$A_{\alpha,\beta}(\rho)=4\pi {\alpha}^{2}(1-\beta)^2 {\rho}^{2}
\sum_{n=0}^{\infty}\frac{(2\beta-1)_{n}^2}{(1)_{n}(2)_n}{\alpha}^{2n}{\rho}^{2n}.
$$
Because the series on the right hand side has positive coefficients, $A_{\alpha,\beta}(\rho)$ is a non-decreasing and convex
function of the real variable $\rho.$ Thus, $A_{\alpha,\beta}(\rho)\leq A_{\alpha,\beta}(1)$, i.e.
$$A_{\alpha,\beta}(\rho)\leq 4\pi {\alpha}^{2}(1-\beta)^2
\sum_{n=0}^{\infty}\frac{(2\beta-1)_{n}^2}{(1)_{n}(2)_n}{\alpha}^{2n}.
$$
In order to look for the Dirichlet-finite function $g_{\alpha,\beta}(z)=z/k_{\alpha,\beta}(z)$,
we now comprise Table~1 for the values of $A_{\alpha,\beta}(1)$ for various choices of $\alpha$ and $\beta$, 
and add pictures of the images of the unit disk under the extremal functions
$g_{\alpha,\beta}(z)$ for the corresponding 
values of $\alpha$ and $\beta$ (see Figures~2 to 3).

\begin{center}
\begin{tabular}{|c|c|c|c|}
\hline
\textbf{Values of $\beta$} & \textbf{Values of $\alpha$} & \textbf{ Approximate Values of $A_{\alpha,\beta}(1)$} \\
\hline
 2/3 & 1/4& 0.0875754 \\ \cline{2-3}
 & 2/3 & 0.638452\\ \cline{2-3}
 & 5/6 & 1.01889\\
\hline
 4/5 &1/4 & 0.0317791 \\ \cline{2-3}
  & 2/3 & 0.245872 \\ \cline{2-3}
  & 5/6 & 0.415385 \\ 
\hline
\end{tabular}
\medskip
\hspace{7.5cm} Table~1
\end{center}

\begin{figure}
\begin{minipage}[b]{0.45\textwidth}
\includegraphics[width=6.5cm]{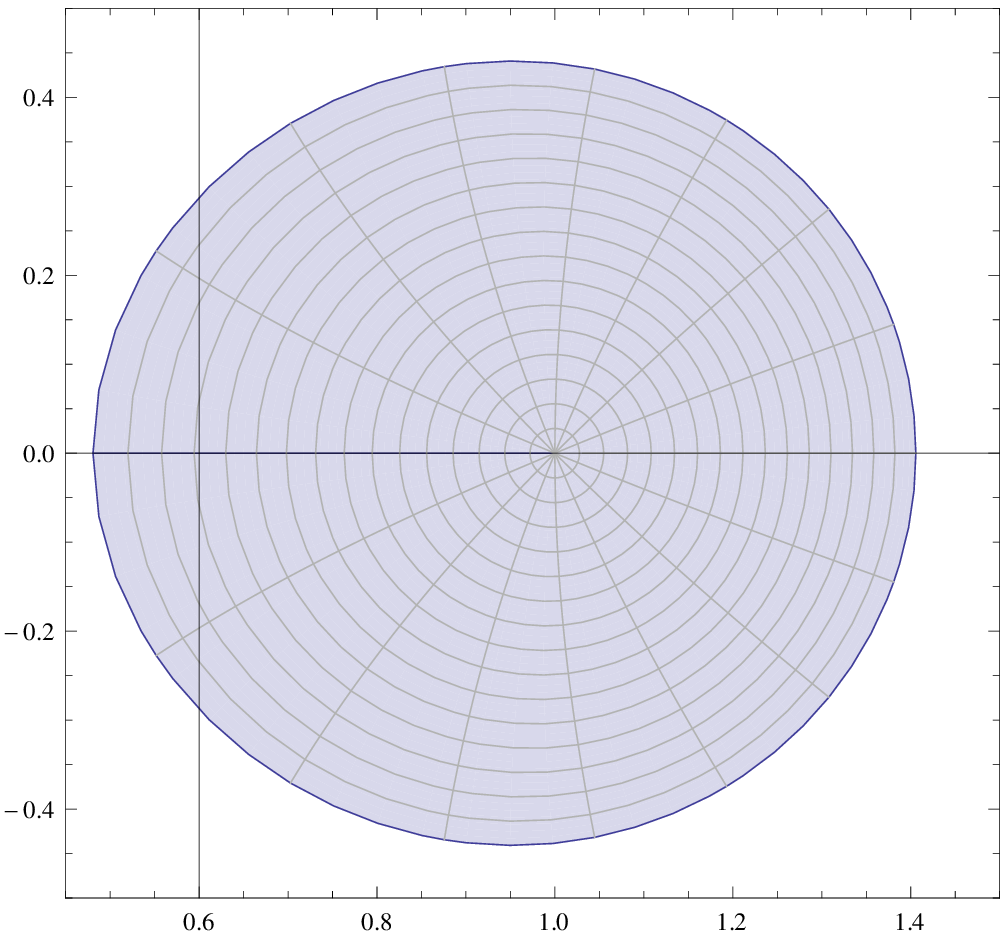}
\hspace*{1cm}The image domain $g_{2/3,2/3}(\D)$
\end{minipage}
\begin{minipage}[b]{0.45\textwidth}
\includegraphics[width=6.45cm]{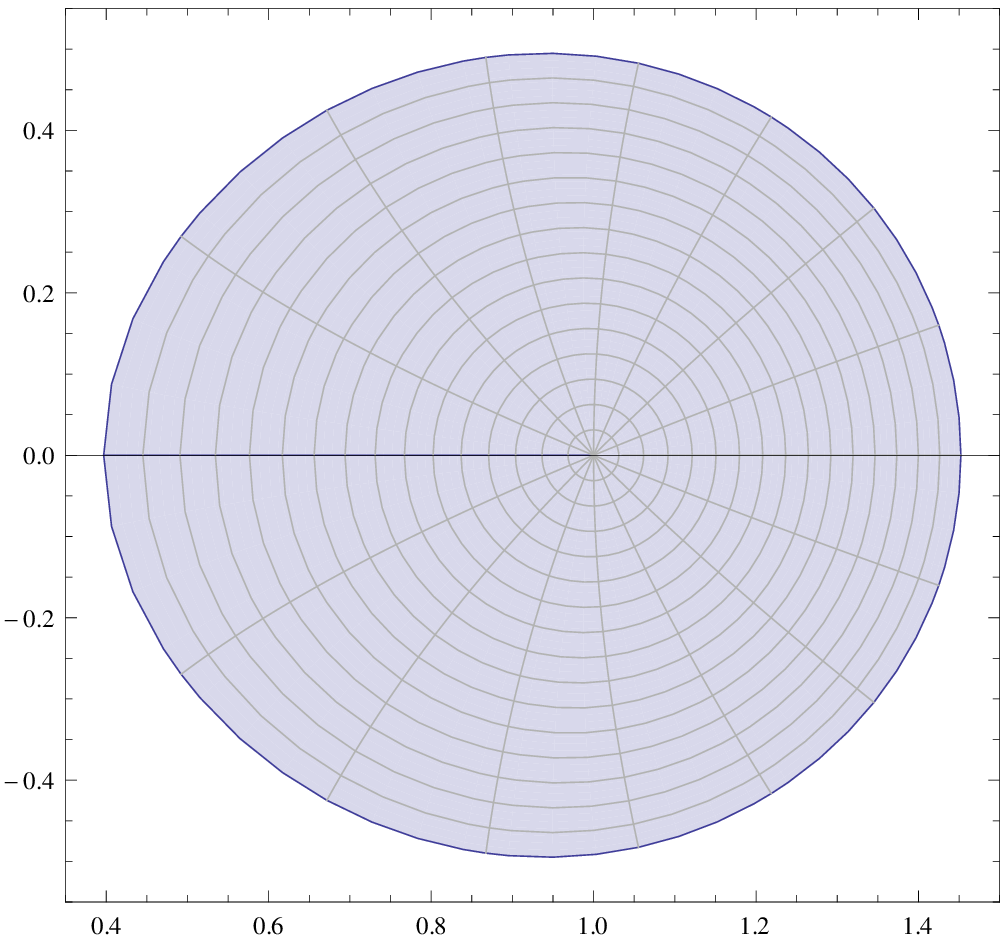}
\hspace*{1cm} The image domain $g_{5/6,2/3}(\D _{0.9})$
\end{minipage}
\caption{Images of the disks $\D$ and $\D_{0.9}$ under $g_{2/3,2/3}$ and $g_{5/6,2/3}$.}
\end{figure}

\begin{figure}
\begin{minipage}[b]{0.45\textwidth}
\includegraphics[width=6.5cm]{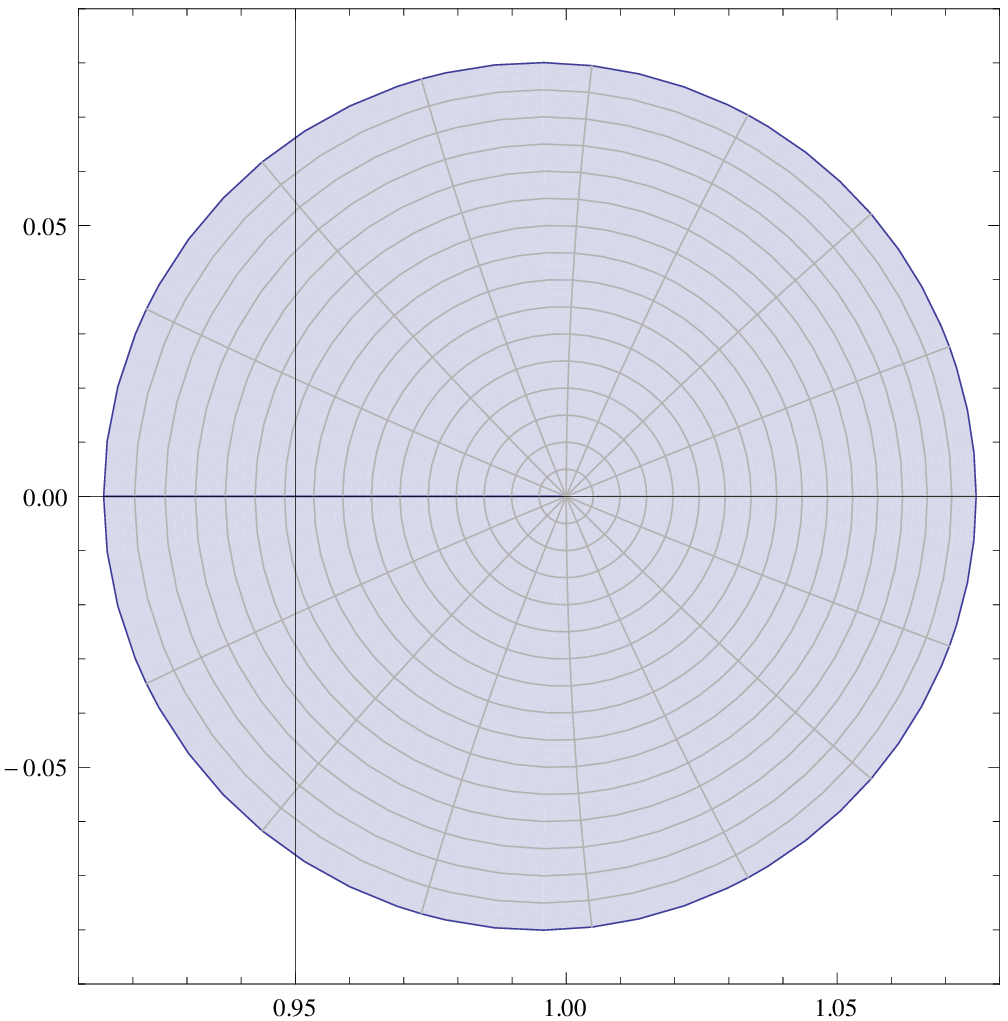}
\hspace*{1cm}The image domain $g_{1/4,4/5}(\D _{0.8})$
\end{minipage}
\begin{minipage}[b]{0.45\textwidth}
\includegraphics[width=7.2cm]{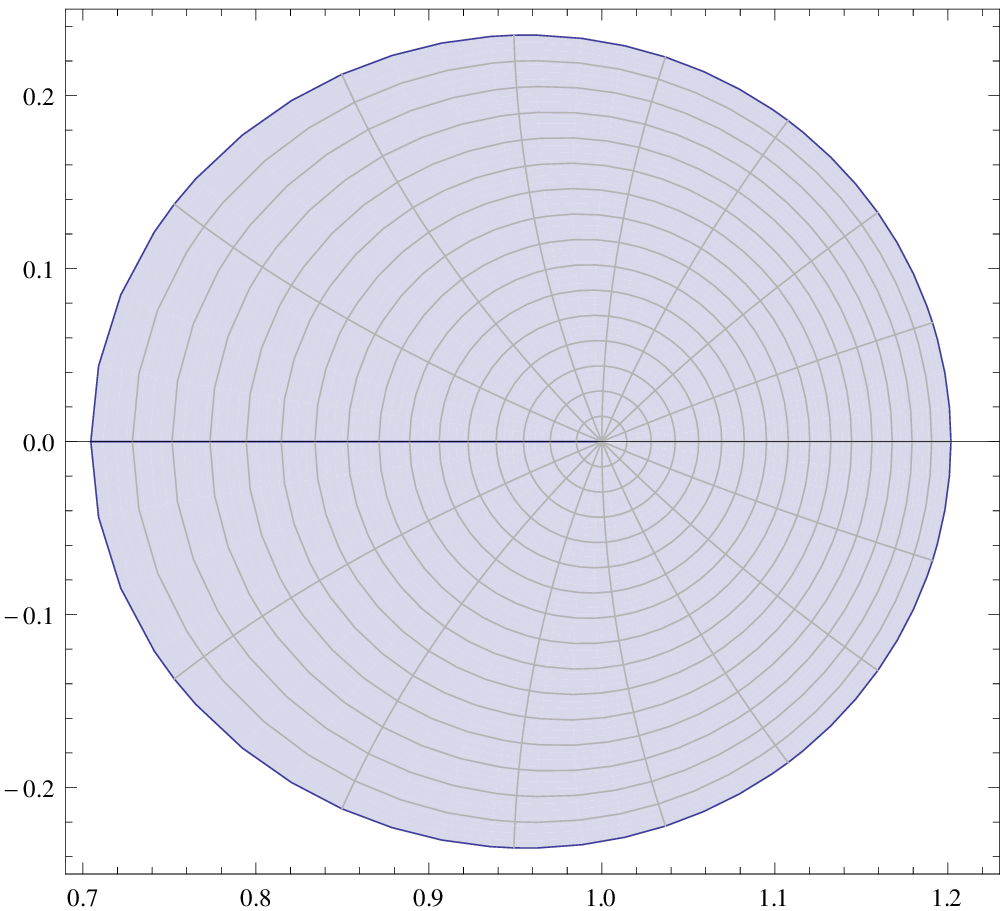}
\hspace*{1cm} The image domain $g_{5/6,4/5}(\D _{0.7})$
\end{minipage}
\caption{Images of the disks $\D_{0.8}$ and $\D_{0.7}$ under $g_{1/4,4/5}$ and $g_{5/6,4/5}$.}
\end{figure}

It is now time for us to prove the main theorem.
\begin{proof}
For $z\in \D,$ we know by Lemma \ref{lem3} that
$$f(z)\in \es (\alpha,\beta) \Longleftrightarrow 
F(z)=z\left( \frac{f(z)}{z}\right)^{\frac{1}{1-\beta}} \in \es (\alpha).
$$
Further $F(z) \in \es (\alpha)$ gives    
$$
\frac{z}{F(z)}=\left(\frac{z}{f(z)}\right)^{\frac{1}{1-\beta}} \prec (1-\alpha z)^2
=\left(\frac{z}{k_{\alpha,\beta}(z)}\right)^{\frac{1}{1-\beta}},
$$
i.e.
$$
\frac{z}{f(z)} \prec (1-\alpha z)^{2(1-\beta)}= \frac{z}{k_{\alpha,\beta}(z)}.
$$
If
$$\frac{z}{f(z)}= 1+ \sum_{n=1}^{\infty}b_nz^n ~\mbox{and}~\ \frac{z}{k(z)}
=1+ \sum_{n=1}^{\infty}c_nz^n,\quad |z|<\rho,
$$
then the extension of Rogosinski's result observed by Goluzin \cite[Theorem 6.3, p.193]{Dur83} yields
$$\sum_{n=1}^{\infty}n|b_n|^2 {\rho}^{2n} \leq \sum_{n=1}^{\infty}n|c_n|^2 {\rho}^{2n}.
$$
 
That is
$$\Delta \left(\rho,\frac{z}{f}\right) \leq \Delta \left(\rho,\frac{z}{k_{\alpha,\beta}}\right)
= \pi {\zeta}^2{\alpha}^2{\rho}^2 {}_2F_1(\zeta +1,\zeta+1;2;{\alpha}^2{\rho}^2),\quad \zeta+1=2\beta-1
$$
whenever the sequence $\{n{\rho}^{2n}\}$ is decreasing function of $\rho,\, 0<\rho\leq \frac{1}{\sqrt{2}}$.
Thus, the theorem is obviously true for $0<\rho\leq \frac{1}{\sqrt{2}}$. On other hand, in order to present 
a proof to include the case $ \rho>\frac{1}{\sqrt{2}}$, it suffices to prove
$$\sum_{n=1}^N n|b_n|^2 {\rho}^{2n} \leq \sum_{n=1}^{N}n|c_n|^2 {\rho}^{2n},\,
N\in \mathbb{N},\, \rho\in (0,1).
$$
This follows from  Lemma~\ref{lem2} and hence the proof of Theorem~\ref{thm1} is complete.
\end{proof}

If we choose ${\beta=0}$ in Theorem~\ref{thm1}, then we get the following Yamashita conjecture problem solved for functions
in the Padmanabhan class $\es(\alpha)$: 
\begin{theorem}\label{thm2}
Let for $0<\alpha \leq 1,\, f\in \es (\alpha)$ and $z/f(z)$ be a non-vanishing analytic function in $\D$.
Then  we have 
$$\displaystyle \max_{f\in \es (\alpha)}\Delta \left(\rho,\frac{z}{f}\right)=2\pi {\alpha}^2\rho^2(2+{\alpha}^2\rho^2)
     =: A_{\alpha}(\rho)	
$$
for all $\rho,\, 0<\rho\leq 1$. The maximum is attained only by the rotation of the function $k_{\alpha}(z)$
defined by $(\ref{eq3}).$
\end{theorem}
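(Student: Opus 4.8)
The plan is to derive Theorem~\ref{thm2} as the special case $\beta=0$ of Theorem~\ref{thm1}, which is already proved in this section. First I would invoke Theorem~\ref{thm1} with $\beta=0$ and $0<\alpha\le 1$: since $\es(\alpha,0)=\es(\alpha)$ by the remarks following Definition~\ref{def2}, and $k_{\alpha,0}=k_\alpha$ by $(\ref{eq3})$, the maximal area is
$$
\max_{f\in\es(\alpha)}\Delta\left(\rho,\frac{z}{f}\right)=A_{\alpha,0}(\rho)
=4\pi\alpha^2\rho^2\,{}_2F_1(-1,-1;2;\alpha^2\rho^2),
$$
attained only by rotations of $k_\alpha$. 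So the entire content of Theorem~\ref{thm2} reduces to the identity $A_{\alpha,0}(\rho)=2\pi\alpha^2\rho^2(2+\alpha^2\rho^2)$, i.e. evaluating the hypergeometric function at the terminating parameters $a=b=2\beta-1=-1$.

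The key computation is the observation that ${}_2F_1(-1,-1;2;x)$ is a polynomial: since $(-1)_n=0$ for all $n\ge 2$, the defining series
$$
{}_2F_1(-1,-1;2;x)=\sum_{n=0}^{\infty}\frac{(-1)_n(-1)_n}{(2)_n(1)_n}x^n
$$
has only the $n=0$ and $n=1$ terms. With $(-1)_0=1$, $(-1)_1=-1$, $(2)_1=2$, $(1)_1=1$, this gives ${}_2F_1(-1,-1;2;x)=1+\tfrac{1}{2}x$. Substituting $x=\alpha^2\rho^2$ yields
$$
A_{\alpha,0}(\rho)=4\pi\alpha^2\rho^2\left(1+\tfrac{1}{2}\alpha^2\rho^2\right)=2\pi\alpha^2\rho^2\bigl(2+\alpha^2\rho^2\bigr),
$$
which is exactly the claimed $A_\alpha(\rho)$. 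Alternatively, one can read this directly off the coefficients $c_n=\frac{(\zeta)_n}{(1)_n}\alpha^n$ of $z/k_{\alpha,\beta}$ computed at the start of Section~3: with $\zeta=-2(1-\beta)=-2$ only $c_1=-2\alpha$ and $c_2=\alpha^2$ survive, so by the area formula $\pi^{-1}\Delta(\rho,z/k_\alpha)=1\cdot|c_1|^2\rho^2+2\cdot|c_2|^2\rho^4=4\alpha^2\rho^2+2\alpha^4\rho^4$, giving the same value.

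There is essentially no obstacle here: once Theorem~\ref{thm1} is in hand, Theorem~\ref{thm2} is a one-line specialization plus the trivial evaluation of a terminating hypergeometric series. The only thing to be careful about is bookkeeping — confirming that the index shifts in $\es(\alpha,0)=\es(\alpha)$, $k_{\alpha,0}=k_\alpha$, and the parameter substitution $\zeta+1=2\beta-1=-1$ are applied consistently, and noting that the extremal-uniqueness clause (rotations of $k_\alpha$) transfers verbatim from the uniqueness clause in Theorem~\ref{thm1}. The monotonicity statement $A_\alpha(\rho)\le A_\alpha(1)$ for $0<\rho\le 1$, if desired, is immediate since $2\pi\alpha^2\rho^2(2+\alpha^2\rho^2)$ visibly increases in $\rho$ on $(0,1]$.
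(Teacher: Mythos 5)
Your proposal is correct and matches the paper's own treatment: the paper derives Theorem~\ref{thm2} precisely as the specialization $\beta=0$ of Theorem~\ref{thm1}, and your evaluation of the terminating series ${}_2F_1(-1,-1;2;\alpha^2\rho^2)=1+\tfrac{1}{2}\alpha^2\rho^2$ correctly reduces $A_{\alpha,0}(\rho)$ to $2\pi\alpha^2\rho^2(2+\alpha^2\rho^2)$. The cross-check via the coefficients $c_1=-2\alpha$, $c_2=\alpha^2$ is a nice confirmation but not needed.
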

We now collect the values of $A_{\alpha}(1)$ in the form of a table (see Table~2) for several values of $\alpha$ 
and affix geometrical pictures of the images of the unit disk under the extremal functions
$g_{\alpha}(z)=z/k_{\alpha}(z)=(1-\alpha z)^{2}$ for the corresponding $\alpha$ values 
(see Figure~4).

\begin{figure}
\begin{minipage}[b]{0.45\textwidth}
\includegraphics[width=6.5cm]{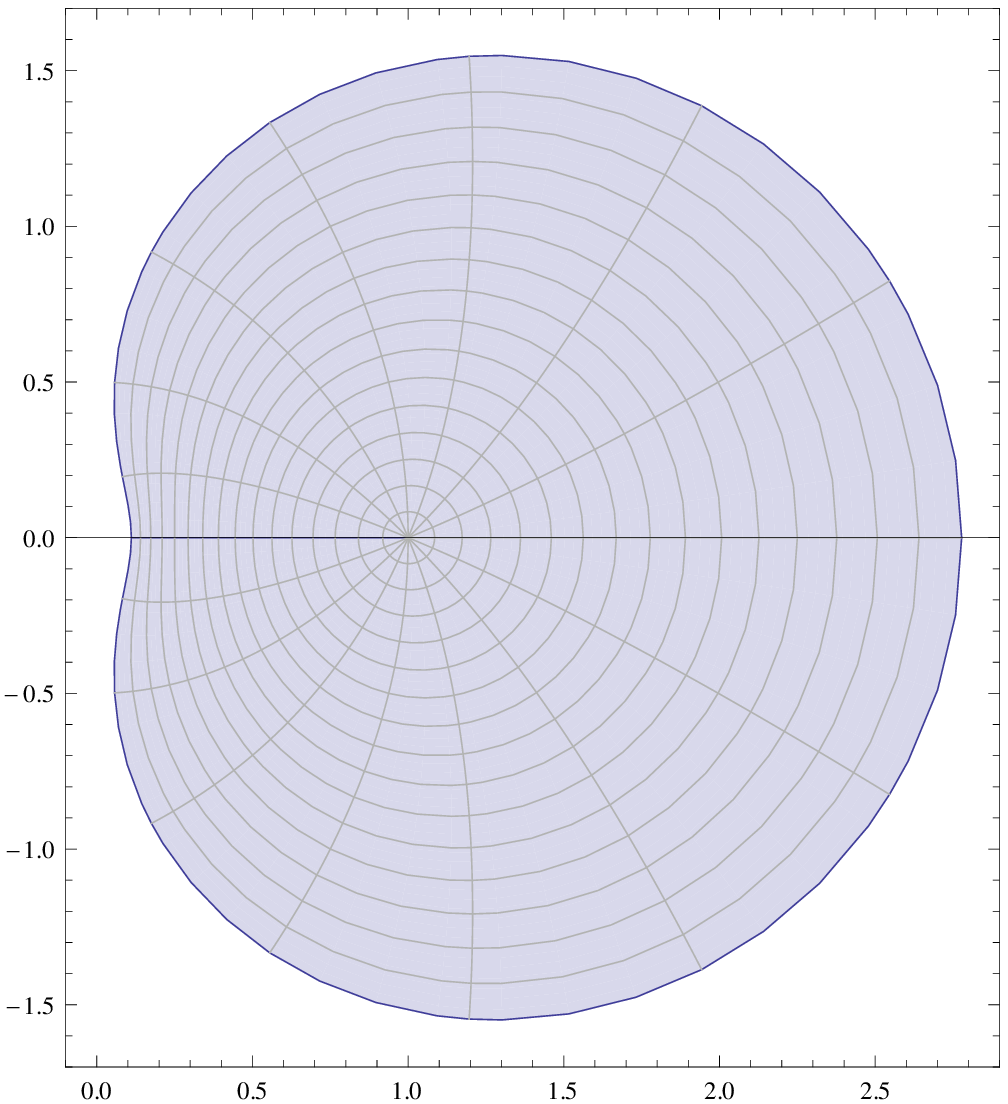}
\hspace*{1cm}The image domain $g_{2/3}(\D)$
\end{minipage}
\begin{minipage}[b]{0.45\textwidth}
\includegraphics[width=6.7cm]{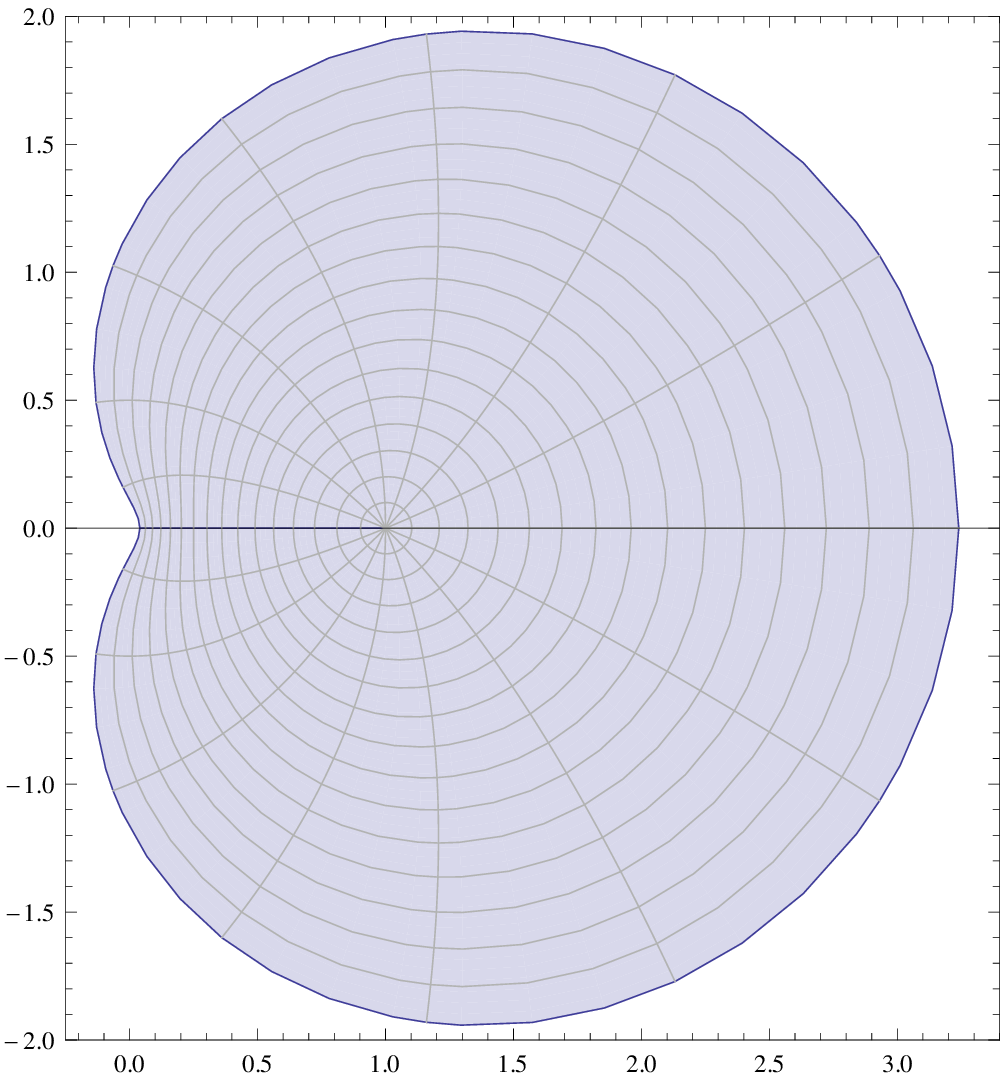}
\hspace*{1cm} The image domain $g_{8/9}(\D _{0.9})$
\end{minipage}
\caption{Images of the disks $\D$ and $\D_{0.9}$ under $g_{2/3}$ and $g_{8/9}$.}
\end{figure}

\vspace*{0.5cm}
\begin{center}
\begin{tabular}{|c|c|c|c|}
\hline
\textbf{Values of $\alpha$} & \textbf{ Approximate Values of $A_{\alpha}(1)$} \\
\hline
  1/4 & 0.809942   \\ \cline{1-2}
  2/3 & 6.82616\\ \cline{1-2}
  5/6 & 11.7567\\ \cline{1-2}
  8/9 & 13.8515 \\
\hline
\end{tabular}
\medskip
\hspace{7.5cm} Table~2
\end{center}

If ${\alpha=1}$ and $\beta=1/2$, then as a consequence of Theorem~\ref{thm1} we get 
\begin{corollary}\label{cor2}\cite[Theorem 2]{OPW13}
We have 
 $$\displaystyle \max_{f\in \es t(1/2)}~\Delta \left(\rho,\frac{z}{f}\right)
    =\pi \rho^2 
 \quad \mbox{for $0<\rho\leq 1$},
 $$
 where the maximum is attained only by the rotation of the Koebe function $k(z)$
defined by $(\ref{eq3}).$
\end{corollary}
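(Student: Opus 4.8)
The plan is to establish the stated identity as the special case $\alpha=1$, $\beta=1/2$ of the Main Theorem (Theorem~\ref{thm1}), so that essentially no new argument is needed beyond substitution. First I would invoke the class identities recorded in Section~1, in particular $\es(1,\beta)=\es t(\beta)$; taking $\beta=1/2$ gives $\es(1,1/2)=\es t(1/2)$, so the extremal problem in the corollary is exactly the one treated by Theorem~\ref{thm1} for these parameters. I would also note that the standing hypothesis of Theorem~\ref{thm1}, that $z/f$ be a non-vanishing analytic function in $\D$, is automatic here: any $f\in\es t(1/2)\subset\es$ is univalent, hence vanishes only at the origin, so $z/f$ has a removable singularity at $0$ with value $1$ and is analytic and zero-free on $\D$ --- precisely the observation opening Section~2. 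In particular the constrained family over which the maximum is taken is non-empty and, as we will see, already contains the extremal function, so writing ``$\max$'' rather than ``$\sup$'' is legitimate.

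Next I would evaluate $A_{\alpha,\beta}(\rho)$ at $\alpha=1$, $\beta=1/2$. The prefactor collapses: $4\pi\alpha^2(1-\beta)^2\rho^2=4\pi\cdot 1\cdot(1/2)^2\rho^2=\pi\rho^2$. In the hypergeometric factor the parameter becomes $2\beta-1=0$, so one faces ${}_2F_1(0,0;2;\rho^2)$; since $(0)_n=0$ for every $n\ge 1$, every term of the defining series with $n\ge 1$ vanishes and the series reduces to its constant term $1$. Hence $A_{1,1/2}(\rho)=\pi\rho^2$ for all $\rho\in(0,1]$, which is the asserted value. Equivalently, using the series form $A_{\alpha,\beta}(\rho)=4\pi\alpha^2(1-\beta)^2\rho^2\sum_{n\ge 0}\frac{(2\beta-1)_n^2}{(1)_n(2)_n}\alpha^{2n}\rho^{2n}$ displayed in Section~3, all summands with $n\ge 1$ drop out and the same conclusion follows.

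For the uniqueness clause I would quote the corresponding clause of Theorem~\ref{thm1}: the maximum is attained only by rotations of $k_{\alpha,\beta}$, and here $k_{1,1/2}(z)=z(1-z)^{-2(1-1/2)}=z/(1-z)$, the function $l$ appearing in Yamashita's original formulation, which is the function named in the statement. As an independent sanity check one sees directly that $z/l(z)=1-z$, whose derivative has modulus $1$, so $\Delta(\rho,z/l)=\pi\rho^2$, matching the right-hand side. Since the whole argument is a substitution into Theorem~\ref{thm1}, I do not anticipate any genuine obstacle; the only two points deserving an explicit sentence are the degeneracy ${}_2F_1(0,0;2;x)\equiv 1$ and the (trivial) verification of the non-vanishing hypothesis, both handled above.
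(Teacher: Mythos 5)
Your proof is correct and takes essentially the same route as the paper, which obtains this corollary purely by substituting $\alpha=1$, $\beta=1/2$ into Theorem~\ref{thm1} and states it without further computation. Your added details --- that ${}_2F_1(0,0;2;\rho^2)\equiv 1$ because $(0)_n=0$ for $n\ge 1$, that the non-vanishing hypothesis is automatic for univalent $f$, and that the extremal function is $k_{1,1/2}(z)=z/(1-z)$ (the half-plane map $l$, rather than the full Koebe function $z/(1-z)^2$, which does not even belong to $\es t(1/2)$) --- are all accurate and, if anything, more careful than the paper's own wording.
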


Moreover, if we choose ${\alpha=1}$ in Theorem~\ref{thm1}, we get
\begin{corollary}\label{cor3}\cite[Theorem 3]{OPW13}
Let $f\in \es t(\beta)$ for some $0\leq \beta <1$. 
Then we have 
 $$\displaystyle \max_{f\in \es t(\beta)}\Delta \left(\rho,\frac{z}{f}\right)
    =4\pi (1-\beta)^2\rho^2 {}_2F_1(2\beta-1,2\beta-1;2;\rho^2) 
 \quad \mbox{for $0<\rho\leq 1$},
 $$
 where the maximum is attained only by the rotation of the function $k_{\beta}(z)$
defined by $(\ref{eq3}).$
\end{corollary}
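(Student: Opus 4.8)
The plan is to obtain this statement as the special case $\alpha=1$ of Theorem~\ref{thm1}. Recall from the chain of inclusions recorded in Section~1 that $\es(1,\beta)=\es t(\beta)$, and from $(\ref{eq3})$ that the extremal function $k_{1,\beta}$ coincides with $k_\beta$. Moreover every $f\in\es t(\beta)$ lies in $\es t\subset\es$ and is therefore univalent, so that $z/f(z)$ is automatically a non-vanishing analytic function on $\D$ with value $1$ at the origin; hence the standing hypothesis of Theorem~\ref{thm1} is met for free when $\alpha=1$. Consequently Theorem~\ref{thm1} applies verbatim and yields
$$
\max_{f\in\es t(\beta)}\Delta\left(\rho,\frac{z}{f}\right)=A_{1,\beta}(\rho),\qquad 0<\rho\le1,
$$
with equality attained only by the rotations of $k_{1,\beta}=k_\beta$.

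It then remains only to simplify $A_{1,\beta}(\rho)$. Setting $\alpha=1$ in the closed form $A_{\alpha,\beta}(\rho)=4\pi\alpha^2(1-\beta)^2\rho^2\,{}_2F_1(2\beta-1,2\beta-1;2;\alpha^2\rho^2)$ immediately gives $A_{1,\beta}(\rho)=4\pi(1-\beta)^2\rho^2\,{}_2F_1(2\beta-1,2\beta-1;2;\rho^2)$, which is exactly the asserted value. If one wishes to be careful about the endpoint $\rho=1$, one checks that the hypergeometric series $\sum_{n\ge0}\frac{(2\beta-1)_n^2}{(1)_n(2)_n}\rho^{2n}$ has parameter excess $2-2(2\beta-1)=4(1-\beta)>0$ for $\beta<1$, hence converges at $\rho=1$, matching the series representation of $A_{1,\beta}(1)$ displayed in Section~3.

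Alternatively, should a self-contained argument be preferred, one reruns the proof of Theorem~\ref{thm1} with $\alpha=1$: Lemma~\ref{lem3} with $\alpha=1$ shows that $f\in\es t(\beta)$ if and only if $F(z)=z\bigl(f(z)/z\bigr)^{1/(1-\beta)}\in\es(1)=\es t$, whence $z/f(z)\prec(1-z)^{2(1-\beta)}=z/k_\beta(z)$; the extension of Rogosinski's inequality due to Goluzin then settles the range $0<\rho\le1/\sqrt2$ where $\{n\rho^{2n}\}$ is decreasing in $\rho$, and Lemma~\ref{lem2} (again with $\alpha=1$) handles $1/\sqrt2<\rho\le1$ via the partial-sum inequality $\sum_{n=1}^{N}n|b_n|^2\rho^{2n}\le\sum_{n=1}^{N}n|c_n|^2\rho^{2n}$. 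Since the corollary is a pure specialization, I expect no genuine obstacle beyond bookkeeping; the only point deserving an explicit line is that the non-vanishing hypothesis of Theorem~\ref{thm1} imposes no extra restriction here, which, as noted, is immediate from the univalence of starlike functions.
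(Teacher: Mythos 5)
Your proposal is correct and follows exactly the paper's route: the paper obtains this corollary simply by setting $\alpha=1$ in Theorem~\ref{thm1}, using $\es(1,\beta)=\es t(\beta)$ and $k_{1,\beta}=k_\beta$, just as you do. Your additional remarks (that the non-vanishing hypothesis is automatic by univalence, and that the hypergeometric series converges at $\rho=1$) are accurate but not needed beyond what the paper records.
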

\section{\bf Concluding Remark}
For $-1\leq B<A \le 1,$ 
the Janowski class $\es ^*(A,B)$ is defined by the subordination relation
$$\es ^*(A,B):=\left \{f\in \A: \frac{zf'(z)}{f(z)}\prec \frac{1+Az}{1+Bz},\quad z\in \D  \right\}.
$$
The class $\es ^*(A,B)$ is introduced in \cite{Jan73} and studied by number of researchers in this field. 
It is evident that $\es ^*(A,B)\subset \es t$.
In \cite{PW13}, it has been reported that Yamashita's conjecture is an open problem to prove for convex functions 
of order $\beta$ and more generally, for functions in the class $\es ^*(A,B)$ and also for the class of functions $f$ for which
$zf'(z)\in \es ^*(A,B)$. In particular, the choices $A=(1-2\beta)\alpha$ and $B=-\alpha$ turn the class
$\es ^*(A,B)$ into the class $\es(\alpha,\beta)$. Therefore, a partial solution to the above open problem
has been solved in this paper.

\bigskip
\noindent
{\bf Acknowledgements.} This research was started in the mid 2013 when the authors were visited ISI Chennai Centre.
We would like to thank Professors R. Pavaratham and S. Ponnusamy for useful discussion in this topic.
The second author also acknowledges the support of the National Board for Higher Mathematics, Department of Atomic Energy,
India (grant no. 2/39(20)/2010-R$\&$D-II). The authors thank one of the referees for useful comments on the paper.

\end{document}